\def \1{{\bf 1}}
\def \a{\mathfrak a}
\def \A{{\mathbb A}}
\def \al{\alpha}
\def \Aut{\operatorname{Aut}}
\def \b{\mathfrak b}
\def \bs{\backslash}
\def \C{{\mathbb C}}
\def \CC{\mathcal{C}}
\def \CF{\mathcal{F}}
\def \CG{\mathcal{G}}
\def \CO{\mathcal{O}}
\def \eps{\varepsilon}
\def \F{{\mathbb F}}
\def \fin{\mathrm{fin}}
\def \Ga{\Gamma}
\def \gen{\mathrm{gen}}
\def \GL{\operatorname{GL}}
\def \ga{\gamma}
\def \Id{{\rm Id}}
\def \Ind{\operatorname{Ind}}
\def \inert{\mathrm{inert}}
\def \la{\lambda}
\def \La{\Lambda}
\def \M{\operatorname M}
\def \Min{\operatorname{Min}}
\def \N{{\mathbb N}}
\def \ol{\overline}
\def \PGL{\operatorname{PGL}}
\def \R{{\mathbb R}}
\def \SL{\operatorname{SL}}
\def \sm{\smallsetminus}
\def \spn{\operatorname{span}}
\def \vert{\operatorname{vert}}
\def \vol{{\rm vol}}
\def \Z{{\mathbb Z}}
\def \({\left(}
\def \){\right)}
\newcommand{\e}
[1]{\emph{#1}\index{#1}}
\newcommand{\norm}
[1]{\left\|#1\right\|}
\newtheorem{theorem}{Theorem}[section]
\newtheorem{lemma}[theorem]{Lemma}
\theoremstyle{definition}
\newtheorem{definition}[theorem]{Definition}
\begin{document}

\pagestyle{myheadings} \markright{PRIME GEODESIC THEOREM FOR BUILDINGS}

\title{A prime geodesic theorem for higher rank buildings}
\author{Anton Deitmar \& Rupert McCallum}
\date{}
\maketitle

{\bf Abstract:} We prove a prime geodesic theorem for compact quotients of affine buildings and apply it to get class number asymptotics for global fields of positive characteristic.  

$$ $$

\tableofcontents

\newpage
\section*{Introduction}

The prime geodesic theorem for a compact, negatively curved Riemannian manifold $M$ states that the number $\pi(x)$ of prime closed geodesics of length $\le x$ on $M$ satisfies
$$
\pi(x)\sim \frac{e^{Ax}}{Bx},
$$
where $A,B>0$ are (explicit) constants depending on the manifold.
It has been applied in \cite{Sarnak} to arithmetic quotients of symmetric spaces to derive class number asymptotics.
In case of hyperbolic spaces, remainder terms have been given, see for instance \cite{Sound,FJK}.
In \cite{class} this has been extended to a case of higher rank and in \cite{classNC} to a non-compact situation.
The full higher rank case has been explored in \cite{HR,HR2}.

The $p$-adic counterpart of symmetric spaces are Bruhat-Tits buildings.
In the rank one case, these are trees.
In \cite{Kyunshang} we applied the Prime Geodesic Theorem for graphs to get class number asymptotics for orders over imaginary quadratic fields.
See \cite{Raulf} for a different approach to this case.
In the current paper we derive the Prime Geodesic Theorem for quotients of buildings and we use it for class number asymptotics for global fields of characteristic zero.
The result is an equidistribution assertion for units in orders over a given global field, where the units are weighted by class numbers and regulators.

\section{The building}
In this section we recall some results of \cite{BL}.
Let $X$ be a locally finite, simplicial, thick, affine building of dimension $d=\dim X$.
Recall that a \e{type function} on a chamber is a bijection from the set $\vert(C)$ of vertices of $C$ to the set $\{0,1,\dots,d\}$.
Further, a \e{type function} on $X$ is a map from the set $\vert(X)$ of vertices of $X$ to $\{0,\dots,d\}$ that restricts to a type function on each chamber.

Given a chamber $C$ the restriction defines a bijection between the set of all type functions on $X$ and the set of all type functions on $C$.
An automorphism $g$ of $X$ is said to be \e{type preserving}, if $g$ preserves one (and hence any) type on $X$.

Let $\Aut(X)$ denote the group of automorphisms of the building $X$, i.e., the group of automorphisms of the complex mapping apartments to apartments.
The group $\Aut(X)$ is a totally disconnected locally compact group where a basis of unit-neighborhoods is given by the set of all pointwise stabilizers of finite sets in $X$.
Let 
$$
G\subset\Aut(X)
$$ 
be an open, finite index subgroup. 
Note that any open subgroup is closed and that any finite index closed subgroup is open.
Let $G^0\subset G$ be the subgroup of all type preserving automorphisms in $G$.
It is normal in $G$ and $G/G^0$ is a finite group.

We assume that $G^0$ acts \e{strongly transitively}, i.e., $G^0$ acts transitively on the set of all pairs $(\a,C)$, where $\a$ is an apartment of $X$ and $C$ is a chamber in $\a$. 
See \cite{AB}, Chapter 6.
This is equivalent to saying that $G^0$ acts chamber-transively on $X$ and the pointwise stabilizer $K_C$ of a given champber $C$ acts transitively on the set of all apartments $\a$ containing $C$.

Let $\partial X$ be the visibility boundary of $X$ and let $\CC\subset\partial X$ be a spherical chamber and fix an apartment $\a\subset X$ such that $\CC\subset\partial \a$. 
Let $P=P_\CC$ be the point-wise stabilizer of $\CC$, then  $P$ is called a \e{minimal parabolic} subgroup of $G$.
In the boundary $\partial\a$, there is a unique chamber $\ol\CC$ opposite to $\CC$.
Let $\ol P$ denite the corresponding parabolic and set
$$
L=P\cap\ol P.
$$
We call $L$ a \e{Levi-component} of $P$.
Let $M$ denote the point-wise stabilizer of $\a$, then $M$ is normal in $L$ and the quotient $A=L/M$ is free abelian of rank $d$. (Note that in \cite{BL} we wrote ``rank $\le d$'', this discrepancy is due to the fact that we are now assuming $G^0$ to act strongly transitively, which we did not assume in \cite{BL}.)
The group $A$ acts on $\a$ by translations. 
Let $A^-$ denote the open cone in $A$ of all elements translating towards a point in the open spherical chamber $\ol\CC$.

Fix a fundamental chamber $C_0$ in $\a$ and let $K\subset G$ denote its \e{pointwise} stabilizer.
As $M\subset K$ it makes sense to write $KAK$ for the set $KLK$.

\begin{definition}\label{defgeneric}
Since $X$ is affine, there exists a metric $d$ on $X$ which is euclidean on each apartment and $G$-invariant.
For $g\in G$ let 
$$
d(g)=\inf_{x\in X}d(gx,x).
$$
As $G$ acts cellularly, this infimum is always attained, hence a minimum.
An element $g\in G$ is called \e{hyperbolic}, if $d(g)>0$, the minimum is attained on a convex subset which is the union of parallel lines along which $g$ is a translation \cite{BL}.
This set is called the \e{minimal set} of $g$ and is written as $\Min(g)$.
We call $g$  a \e{generic} element, if the so defined element of $\partial X$ is generic, i.e., is not in a lower dimensional stratum.
This implies that the minimal set $\Min(g)$ consists of exactly one apartment.
\end{definition}

\section{Closed Geodesics}
\begin{definition}
A \e{geodesic curve} in $X$ is a curve $c:\R\to X$, which in the euclidean structure of each apartment is a straight line which is parametrized at unit speed.
Two curves $c,c'$ are called equivalent if there exists $t_0\in\R$ such that $c'(t)=c(t+t_0)$ holds for all $t\in\R$.
A \e{geodesic} in $X$ is an equivalence class of geodesic curves.
\end{definition}

\begin{definition}
Let $\Gamma\subset G$ be a discrete,  cocompact subgroup of $G$.
By a \e{geodesic in $\Ga\bs X$} we understand the image of a geodesic in $X$.
This is the proper definition in the case when $\Ga$ is allowed to have torsion elements and thus the local euclidean structure is not necessarily preserved in the quotient $\Ga\bs X$.
A geodesic $c$ in $\Ga\bs X$ is called a \e{closed geodesic}, if $c(t+l)=c(t)$ holds for some $l>0$ and all $t\in\R$.
A closed geodesic $c$ in in $\Ga\bs X$ thus lifts to a geodesic $\tilde c$ in $X$.
The forward direction of $\tilde c$ defines a point $b\in\partial X$.
We call $c$ a \e{generic} geodesic, if $b$ is a generic point of the spherical building $\partial X$, i.e., if $b$ is not contained in any wall of a Weyl chamber.
\end{definition}

For a given closed geodesic $c$ there exists $\ga\in\Ga$ closing it, i.e., if $c(t+l)=c(t)$ for all $t\in\R$ and some $l>0$, then $\tilde c(t+l)=\ga\tilde c(t)$ holds for a uniquely determined $\ga\in\Ga$ and all $t\in\R$.
This element $\ga$ is hyperbolic and $\tilde c$ lies in its minimal set $\Min(\ga)$.

\begin{lemma}\label{lem2.1}
Let $A^0=A\cap G^0$, or rather $A^0=L^0/M$, where $L^0=L\cap G^0$.
\begin{enumerate}[\rm (a)]
\item 
The group $G^0$ is generated by the set $KA^0K$. Likewise, it is generated by the union of all pointwise stabilizers $K_C$ of chambers $C$ in $X$.
\item If $\ga\in\Ga$ closes a generic geodesic, then $\ga$ is hyperbolic and conjugate in $G$ to an element of $KAK$.
In particular, $\ga$ is generic (Definition \ref{defgeneric}).
\item Let $\theta: G^0\to H$ be a group homomorphism with $\theta(K)=1$. Then $\theta\equiv 1$.
\end{enumerate}
\end{lemma}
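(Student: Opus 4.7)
My plan is to dispatch the three parts in order: part (c) is a direct consequence of the second half of (a), and part (b) uses the uniqueness of the translation axis for a generic hyperbolic element.

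For part (a), strong transitivity gives $G^0$ a $BN$-pair structure with $B = K$ the pointwise stabilizer of the fundamental chamber $C_0$ and $N$ the setwise stabilizer of $\a$, whence an Iwahori--Bruhat decomposition $G^0 = \bigsqcup_{w \in \widetilde W} KwK$ with $\widetilde W = N/M = W_0 \ltimes A^0$. To deduce $G^0 = \langle K A^0 K\rangle$, I reduce to lifting each simple reflection $s \in W_0$ into $\langle K A^0 K\rangle$: $s$ fixes a special vertex $v$ and lies in the maximal parahoric at $v$, which is generated by $K$ together with translations in $A^0$ carrying $v$ onto adjacent special vertices. For the second half, set $H = \langle \bigcup_C K_C\rangle$; since $K = K_{C_0} \subset H$ and $G^0$ is chamber-transitive, it suffices to show $H$ is itself chamber-transitive, which I will prove by induction on gallery distance. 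The inductive step needs, for adjacent chambers $C, C'$ sharing a panel $\sigma$, an element of $H$ sending $C$ to $C'$: strong transitivity combined with type preservation (a type-preserving element fixing $\sigma$ setwise fixes it pointwise, since the vertices of a facet have distinct types) supplies a reflection $r \in G^0 \cap K_\sigma$ exchanging $C$ and $C'$, and the thick rank-one residue at $\sigma$ identifies $K_\sigma$ with the group generated by $\{K_D : D \supset \sigma\}$, so $r \in H$.

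For part (b), the relation $\ga \tilde c(t) = \tilde c(t+l)$ stated just before the lemma shows $\tilde c \subset \Min(\ga)$ and that $\ga$ acts on $\tilde c$ as translation by $l > 0$, so $\ga$ is hyperbolic. Its translation direction equals the forward endpoint $b \in \partial X$ of $\tilde c$, which is generic by hypothesis, so $\ga$ is a generic element and $\Min(\ga) = \a'$ is a single apartment by Definition \ref{defgeneric}. Strong transitivity of $G^0$ on (apartment, chamber) pairs upgrades to transitivity on (apartment, chamber-at-infinity) pairs, because $N/M$ contains $W_0$, which permutes the chambers of $\partial\a$ transitively. Choose $g \in G^0$ with $g\a' = \a$ and $gb$ lying in the open chamber $\CC$. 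Then $g\ga g^{-1}$ stabilizes $\a$ and acts on it by translation; any translation fixes $\partial\a$ pointwise, so $g\ga g^{-1}$ fixes $\CC$ and $\ol\CC$ pointwise, placing it in $L = P \cap \ol P$. Since $M \subset K$, $L \subset KLK = KAK$, and genericity of $\ga$ has already been established.

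For part (c), chamber-transitivity of $G^0$ writes every $K_C$ as $gKg^{-1}$ for some $g \in G^0$, so $\theta(K_C) = \theta(g)\theta(K)\theta(g)^{-1} = 1$; by the second half of (a), $\theta$ kills a generating set of $G^0$ and hence vanishes. The main obstacle lies inside part (a), specifically the claim $K_\sigma \subset H$: this is the only step where thickness is used essentially, and I will need to work carefully with the rank-one residue at a panel to establish it.
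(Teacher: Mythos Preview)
Your treatments of (b) and (c) are correct and essentially match the paper's. Your gallery-induction proof of the second half of (a) is also valid, and the step you single out does go through: strong transitivity makes $K_\sigma$ act 2-transitively on the set of chambers through $\sigma$ (which has at least three elements by thickness), and a 2-transitive permutation group is always generated by its point stabilizers, so indeed $K_\sigma=\langle K_D:D\supset\sigma\rangle\subset H$.

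The genuine gap is in your argument for the first half of (a). After reducing via the Bruhat decomposition to lifting each simple reflection $s\in W_0$ into $\langle KA^0K\rangle$, you assert that the maximal parahoric $K_v$ at the special vertex $v$ is ``generated by $K$ together with translations in $A^0$ carrying $v$ onto adjacent special vertices.'' Read literally this is false: nontrivial translations move $v$ and so lie outside $K_v$. If you intend instead that $K_v\subset\langle K,A^0\rangle$, that is exactly the statement in question, and there is no shortcut here: since every $t\in A^0$ is type-preserving, the chamber $tC_0$ shares no vertex with $C_0$, so the conjugates $tKt^{-1}=K_{tC_0}$ are never contained in $K_v$, and you cannot directly manufacture a second Iwahori inside $K_v$ from $K$ and $A^0$ alone. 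The reduction to $W_0$ thus stalls.

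The paper avoids separating the two halves. It shows directly that $H=\langle KAK\rangle$ acts chamber-transitively: for each neighbor $C$ of $C_0$ across a panel $F$, it chooses a translation $T$ so that $K_E=TKT^{-1}\subset H$ for $E=TC_0$, and then strong transitivity applied to $K_E$ produces an element carrying $C_0$ to $C$. Iterating gives $H=G^0$ and simultaneously exhibits every $K_C$ as a conjugate of $K$ by an element of $H$, proving both assertions at once. To repair your argument you should import this translation trick, or equivalently run your gallery induction inside $\langle KA^0K\rangle$ from the outset---but the inductive step then still needs one nontrivial chamber stabilizer in $\langle KA^0K\rangle$ beyond $K$ itself, and a well-chosen translation is exactly what supplies it.
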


\begin{proof}
(a) 
Let $C_0$ be the fundamental chamber and let $g\in G^0$.
If $gC_0=C_0$ then $g$ lies in $K$ already and we are done.
Otherwise, we have to show that the group $H$ generated by $KAK$ contains an element $h$ such that $gC_0=hC_0$, because then $h^{-1}g\in K$ and (c) is proven.
So we have to show that $HC_0$ contains all chambers of $X$.

We start by showing that it contains all direct neighbors $C$ of $C_0$.
Choose a labeling $v_0,\dots,v_d$ of the vertices of $C_0$.
Let $C$ be a neighbor sharing the face $F$ with vertices $v_1,\dots,v_d$ with $C_0$.
As $X$ is thick, there is another such neighbor $D$ of $C_0$. Let $\a$ be an apartment containing $C_0$ and $D$. Let $s$ be the reflection at $F$ in $\a$.
Let $T$ be the Weyl translation with $Tv_0=w_0=s(v_0)$ and let $E$ be the chamber $T(C_0)$.
The translation $T$ extends to an element of $KAK\subset H$, so that the group $TKT^{-1}$ lies in $H$.
This group, however, is the pointwise stabilizer $K_E$ of the chamber $E$. 
Let now $\a'$ be an apartment containing $E$ and $C$, then there is $k\in K_E\subset H$ with $k\a=\a'$, hence $kC_0=C$ as claimed.
This proof can now be iterated to show that $HC_0$ contains the direct neighbors of direct neighbors of $C_0$ and so on. 
Examining the proof, one finds that we have shown that the group generated by all stabilizers of chambers generates $G$.

(b) If $\ga$ closes a generic geodesic, it must be hyperbolic and its minimal set is an apartment $\a'$, \cite{BL}. So $\ga$ induces a translation on this apartment, which makes it an element of $K'A'K$ where $K'$ is the stabilizer of a chamber in $\a'$. Now the set $K'A'K'$ is $G$-conjugate to $KAK$ and the claim follows.

(c) If $\theta(K)=1$, then $\theta(gKg^{-1})=1$ for every $g\in G^0$, but these groups generate $G^0$ byt part (a), so $\theta\equiv 1$.
\end{proof}

\section{The zeta function}
Let $\ga\in\Ga$ be a generic element as in Definition \ref{defgeneric}.
The minimal set $\Min(\ga)$ is an apartment $\a$.
Let $G_\a$ denote the stabilizer of $\a$ in $G$ and let $G_\ga$ and $\Ga_\ga$ denote the respective centralizers of $\ga$.
Let $G^\a$ denote the image of $G_\a$ in $\Aut(\a)$ and let $\Ga_\ga^\a\subset G_\ga^\a\subset G^\a$ denote the image of the groups $\Ga_\ga$ and $G_\ga$ in $G^\a$.
Then the set $G^\a_\ga/\Ga_\ga^\a$ is finite. 
Note that, as $G^0$ acts strongly transitively, $G^\a$ contains the Weyl group of $\a$.

For a measurable set $M\subset G$ we set
$$
[\ga:M]=\vol\(\{x\in G/G_\ga: x\ga x^{-1}\in M\}\).
$$
In \cite{BL} it is shown that the index $[\ga:KA^-K]$ is a natural number.

\begin{lemma}
For an element $\ga\in G$ the following are equivalent:
\begin{enumerate}[\rm (a)]
\item $\ga$ is generic,
\item $\ga$ is conjugate to an element of $KaK$ with $a\in A^-$. 
\end{enumerate}
In this case the element $a$ is uniquely determined by $\ga$ and the index satisfies $[\ga:KA^-K]=[\ga:KaK]=1$. 
\end{lemma}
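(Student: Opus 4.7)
The plan is to prove the two implications (a)$\Leftrightarrow$(b) separately, and then derive the uniqueness of $a$ and the index equalities from the Cartan-type decomposition for $KA^-K$ established in \cite{BL}.

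For (a)$\Rightarrow$(b), I start from Definition \ref{defgeneric}: generic $\ga$ has $\Min(\ga)=\a'$ a single apartment on which $\ga$ acts as a translation in a direction $v'$ lying in the open interior of some spherical Weyl chamber of $\partial\a'$. By strong transitivity, $G^0$ acts transitively on apartments, so there is $h\in G^0$ with $h\a'=\a$. The stabilizer of $\a$ in $G^0$ surjects onto the full affine Weyl group of $\a$ (again by strong transitivity), so a further conjugation brings the translation direction into the open chamber $\ol\CC$. After these two conjugations $\ga$ acts on $\a$ as a translation into $\ol\CC$, hence lies in $L^-:=\pi^{-1}(A^-)\subset L$. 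Since $M\subset K$ one has $L^-\subset KL^-K=KA^-K$, and in fact $\ga\in KaK$ where $a\in A^-$ is the image of $\ga$ under the projection $L\to A$.

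For (b)$\Rightarrow$(a), WLOG $\ga=k_1 a k_2$ with $a\in A^-$; the aim is to show $\ga$ is hyperbolic with $\Min(\ga)$ a single apartment on which it translates into the open cone $\ol\CC$. The clean approach is to show $\ga$ is $G$-conjugate to an element of $L^-$ itself, which is manifestly generic. For $x\in C_0$ one immediately has $d(\ga x,x)=d(ax,x)=|a|>0$ (since $k_2$ fixes $C_0$ pointwise and $k_1$ is an isometry with $k_1^{-1}x=x$), so $d(\ga)\le|a|$. For equality and for uniqueness of the minimal apartment, my plan uses the Iwahori-type factorization $K=(K\cap\ol U)\cdot M\cdot(K\cap U)$ (where $U,\ol U$ are the unipotent radicals of $P,\ol P$), together with the dynamical fact that $a\in A^-$ strictly contracts $U$ and inflates $\ol U$ under conjugation. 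Iterating the conjugations $u\mapsto a^{-1}ua$ and $\ol u\mapsto a\ol u a^{-1}$ on the unipotent factors of $k_1,k_2$ eliminates them in the limit, yielding a conjugate of the form $m'a\in L^-$. The uniqueness of $a$ then follows from the disjointness of $KaK$ cosets for distinct $a\in A^-$ (\cite{BL}), and the index equalities $[\ga:KA^-K]=[\ga:KaK]=1$ follow because the uniqueness of the minimal apartment forces the set $\{x\in G/G_\ga:x\ga x^{-1}\in KaK\}$ to be a single $G_\ga$-orbit of unit volume.

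The main obstacle is the (b)$\Rightarrow$(a) direction, specifically verifying that the iterative Iwahori-contraction argument converges inside $G$ and produces an actual conjugator rather than merely an asymptotic one. The delicate bookkeeping is in matching the Iwahori filtration of $K$ with the depth of $a$ in the cone $A^-$, and this is precisely the content of the structural results used from \cite{BL}.
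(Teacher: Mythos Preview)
Your (a)$\Rightarrow$(b) is essentially the paper's argument and is fine.

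For (b)$\Rightarrow$(a) there is a real gap. You invoke unipotent radicals $U,\ol U$ and an Iwahori factorization $K=(K\cap\ol U)\cdot M\cdot(K\cap U)$, but in the paper's setting $G$ is merely an open finite-index subgroup of $\Aut(X)$ for a thick affine building $X$; no algebraic-group structure, no root groups, and in particular no unipotent radicals are assumed to exist. An Iwahori-type decomposition of $K$ is a Moufang/BN-pair feature and is not available at this level of generality, so the contraction argument you sketch has nothing to act on. (You yourself flag convergence as the obstacle, but the prior obstacle is existence of the factors.) The paper sidesteps all of this: (b)$\Rightarrow$(a) and the uniqueness of $a$ are not reproved here but are quoted directly from Lemma~2.3.8 of \cite{BL}, where they hold even without strong transitivity. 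The honest fix is to cite that lemma rather than to import algebraic-group machinery.

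Your index argument is also too thin. Saying ``uniqueness of the minimal apartment forces the set $\{x\in G/G_\ga:x\ga x^{-1}\in KaK\}$ to be a single $G_\ga$-orbit of unit volume'' is not a proof: the set already lives in $G/G_\ga$, so speaking of a $G_\ga$-orbit is off, and uniqueness of $\Min(\ga)$ alone does not tell you why two conjugators $x$ differ by something in $KG_\ga$. The paper's argument is concrete and geometric: if $\ga$ and $x\ga x^{-1}$ both lie in $KaK$, pass to $K$-conjugates in $aK$ so that $\ga C_0=x\ga x^{-1}C_0=aC_0$; then $C_0$ and $aC_0$ lie in both $\Min(\ga)$ and $x\Min(\ga)$. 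Strong transitivity now enters essentially: $K$ acts transitively on apartments through $C_0$, so after $K$-conjugation both $\ga$ and $x\ga x^{-1}$ preserve $\a$ and act by the same translation; hence $x$ preserves $\a$, acts by a translation, and one finds $y\in G_\ga$ with the same translation part, giving $xy^{-1}\in K$. That chain of reductions is the actual content, and your sketch does not supply it.
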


\begin{proof}
If $\ga$ is generic, then, as $G^0$ acts strongly transitively, modulo conjugation we can assume $\Min(\ga)=\a$ and thus $\ga=ak$ for some generic $a\in A$ and $k\in K$. 
As the image of $G^\a$ in $\Aut(\a)$ contains the Weyl group, we can conjugate $a$ into $A^-$. This establishes (a)$\Rightarrow$(b).

The assertion (b)$\Rightarrow$(a) and the uniqueness of $a$ is valid without the assumption of strong transitivity and is proven in Lemma 2.3.8 of \cite{BL}.
The assertion about the index rests on strong transitivity.
With the given normalizations,
\begin{align*}
[\ga:KaK]&=\vol\(\{x\in G/G_\ga: x\ga x^{-1}\in KaK\}\)\\
&=\left|\big\{a\in K\bs G/G_\ga:x\ga x^{-1}\in KaK\big\}\right|.
\end{align*}
So we need to show that if $\ga,x\ga x^{-1}$ both lie in $KaK$, then $x\in KG_\ga$.
If $\ga,x\ga x^{-1}\in KaK$, we can replace both with $K$-conjugates to get them into $aK$.
This means that $\ga C_0=x\ga x^{-1}C_0=aC_0$, so $C_0,aC_0$ lie in $\Min(\ga)$ and in $\Min(x\ga x^{-1})=x\Min(\ga)$.
That means that $\b=\Min(\ga)$ and $x\b$ are apartments containing $C_0$.
By strong transitivity, $K$ acts transitively on these apartments, therefore there are $k_1,k_2\in K$ with
$$
\a=k_1\Min(\ga)=\Min(k_2\ga k_1^{-1})=k_2x\Min(\ga)=\Min(k_2x\ga x^{-1}k_2^{-1}).
$$
Replacing $\ga$ with $k_1\ga k_1^{-1}$ and $x$ with $k_2x$ we can assume that $\ga$ and $x\ga x^{-1}$ preserve $\a$ and both act as the same translation $y\mapsto ay$ on $\a$.
Then $\a=\Min(x\ga x^{-1})=x\Min(\ga)=x\a$, so $x$ preserves $\a$ as well. As $x\ga x^{-1}$ and $\ga$ act as the same generic translation on $\a$, $x$ itself acts by a translation on $\a$.
There is an element $y\in G_\ga$ acting by the same translation as $x$, so $xy^{-1}\in K$. The claim follows.
\end{proof}

\begin{definition}\label{def3.1}
Let $\Ga^\gen$ denote the set of generic elements in $\Ga$ and let $[\Ga^\gen]$ denote the set of $\Ga$-conjugacy classes in $\Ga^\gen$.
Let $\a$ denote the apartment used to define $A$. Let $v_0$ be a special vertex in $\a$ and let $C$ be the unique chamber in $\a$ with vertex $v_0$ such that the wall $W$ of $C$ which is opposite to $v_0$, faces $\ol\CC$.
Let $v_1,\dots,v_d$ denote the remaining vertices of $C$.
The map $v_j\mapsto j$ extends in a unique way to a map from the set $V(X)$ of vertices of $X$ to $\{0,1,\dots,d\}$ which is injective on the set $V(D)$ of vertices of any given chamber $D$.
The image of a vertex is called the \e{type} of the vertex.
Then all vertices of type zero are special vertices.

Using $v_0$ as origin we give $\a$ the structure of a real vector spaces and $v_1,\dots,v_d$ is a basis.
Let $e_j=r_jv_j$, where $r_j>0$ is the smallest rational number such that all vertices of type zero are contained in
$$
Z=\Z e_1\oplus\dots\oplus\Z e_d.
$$
A given $a\in A$ acts on $\a$ by translation $ax=x+v_a$ where
$$
v_a=\la_1(a)e_1+\dots+\la_d(a)e_d
$$
is the translation vector. Since this translation respects the simplicial structure, the numbers $\la_1(a),\dots,\la_d(a)$ are integers.
Indeed, the map
\begin{align*}
\la:A&\mapsto \Z^d,\\
a&\mapsto (\la_1(a),\dots,\la_d(a))
\end{align*}
is an isomorphism of the group $A$ to a lattice $\La=\La_A\subset\Z^d$, which maps the cone $A^-$ to the cone 
$$
\La^+=\big\{\la\in\La:\la_1,\dots,\la_d>0\big\}.
$$

\end{definition}
For $u\in\C^d$ and $a\in A^-$ we write
$$
u^a=u_1^{\la_1(a)}\cdots u_d^{\la_d(a)},
$$
and define
$$
S(u)=\sum_{[\ga]\in[\Ga^\gen]}|G_\ga^\a/\Ga_\ga^\a| \ u^{a_\ga},
$$
where $a_\ga\in A^-$ is the unique element such that $\ga$ is $G$-conjugate to an element of $Ka_\ga K$.
Theorem 2.4.2 of \cite{BL} states that the series $S(u)$ converges for small $u$ to a rational function. More precisely, there exists a finite set $E\subset A$, elements $a_1,\dots,a_d\in A^-$ and quasi-characters $\eta_1,\dots,\eta_r:A\to\C^\times\cup{0}$ such that
$$
S(u)=\sum_{j=1}^r\sum_{e\in E}\frac{\eta_j(e)u^e}{(1-\eta_j(a_1)u^{a_1})\cdots(1-\eta_j(a_d)u^{a_d})}.
$$
Moreover, the space $L(\Ga\bs G)^K\cong L^2(\Ga\bs G/K)$ has a basis $\phi_1,\dots,\phi_r$ such that all $R(\1_{KaK})$ with $a\in A^-$ are in Jordan normal form with respect to this basis.
In particular,
$$
R(\1_{KaK})\phi_j-\eta_j(a)\phi_j\in\spn(\phi_1,\dots,\phi_{j-1})
$$
holds for every $j$ and all $a\in A^-$ and this equation defines the quasi-character $\eta_j$.
Here $R$ is the right translation representation of $G$ on $L^2(\Ga\bs G)$, so $R(y)\phi(x)=\phi(xy)$, $x,y\in G$ and for a function $f$ on $G$ (like $f=\1_{KaK}$) we define $R(f)$ by integration:
$$
R(f)\phi(x)=\int_G f(y)\phi(xy)\,dy.
$$
Note that the space $L^2(\Ga\bs G)^K$ contains the constant function $\phi\equiv 1$. Then
$$
R(\1_{KaK})\phi(x)=\int_G\1_{KaK}(y)\phi(xy)\,dy=\vol(KaK)=|KaK/K|,
$$
since we normalize the Haar measure by giving the compact open subgroup $K$ volume 1. 
We can assume $\phi_1\equiv 1$.

At this point we note that we have a certain amount of freedom in choosing the group $G\subset\Aut(X)$.
By changing $G$ if necessary, we can assume that $G$ is generated by $G^0$ together with $\Ga$.
Since $\Ga$ and $G^0$ are subgroups and $G^0$ is a normal subgroup, this means that we have
$$
G=\Ga G^0,
$$
i.e., every $g\in G$ can be written as a product $g=\ga g^0$ of some $\ga\in\Ga$ and an element $g^0$ of $G^0$.

\begin{lemma}\label{lem3.2}
For $j=1,\dots,r$ we have
$$
|\eta_j(a)|\le |KaK/K|,\quad a\in A^-.
$$
For every $j\ge 2$ there exists $a\in A^-$ with
$$
|\eta_j(a)|< |KaK/K|.
$$
\end{lemma}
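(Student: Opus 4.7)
The plan is to reduce the statement to the claim $V_{\eta_1} = \C\phi_1$ (the simultaneous generalized eigenspace of the commuting family $\{T_a : a \in A^-\}$, where $T_a := R(\1_{KaK})$). The first inequality is immediate from the operator-norm bound $\|T_a\| \le \|\1_{KaK}\|_1 = \vol(KaK) = |KaK/K|$, which is attained since $T_a\phi_1 = |KaK/K|\phi_1$. The $\eta_j(a)$ are the diagonal entries, hence eigenvalues, of the Jordan form of $T_a$, so $|\eta_j(a)| \le \|T_a\| = |KaK/K|$. Once $V_{\eta_1} = \C\phi_1$ is known, $\phi_j \notin V_{\eta_1}$ for $j \ge 2$; the maximum-modulus argument below, applied to a joint actual eigenvector in $V_{\eta_j}$, will show that $|\eta_j(a)| = |KaK/K|$ for every $a \in A^-$ would force that eigenvector to be constant, hence $V_{\eta_j} \ni \phi_1$ and so $V_{\eta_j} = V_{\eta_1}$, contradicting $\phi_j \notin \C\phi_1$.

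The heart of the argument is the maximum-modulus claim: any $\phi \in L^2(\Ga\bs G/K)$ with $T_a\phi = \lambda_a\phi$ for every $a \in A^-$ and $|\lambda_a| = |KaK/K|$ must be constant. Writing $KaK = \bigsqcup_i k_i a K$, one has $T_a\phi(x) = \sum_i \phi(xk_ia)$; equality in the triangle inequality at a point $x_0$ where $|\phi|$ is maximal forces $|\phi(x_0k_ia)| = \|\phi\|_\infty$ for each $i$ with a common complex argument among the summands. Propagating this yields $|\phi| \equiv \|\phi\|_\infty$ and shows that the ratio $\chi(g) := \phi(yg)/\phi(y)$ is independent of $y$. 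Reading $\phi(yg_1g_2)/\phi(y)$ in two ways gives multiplicativity, and $\chi(K) = 1$ by right $K$-invariance, so $\chi:G^0 \to \C^\times$ is a homomorphism trivial on $K$. Lemma \ref{lem2.1}(c) forces $\chi \equiv 1$; hence $\lambda_a = |KaK/K|$ and $\phi$ is right $G^0$-invariant, so constant by transitivity of $G^0$ on $\Ga\bs G$ (which holds because $G = \Ga G^0$).

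It remains to rule out non-trivial Jordan blocks in $V_{\eta_1}$. If some $T_a|_{V_{\eta_1}} = \eta_1(a)I + N_a$ with $N_a$ nonzero nilpotent of degree $m \ge 2$, then expanding $T_a^n$ shows $\|T_a^n|_{V_{\eta_1}}\|$ grows like $n^{m-1}|\eta_1(a)|^n$, contradicting $\|T_a^n\| \le \|T_a\|^n = |\eta_1(a)|^n$. Hence $T_a$ acts as the scalar $\eta_1(a)$ on $V_{\eta_1}$, every vector there is a joint actual eigenvector, and the maximum-modulus claim forces each to be constant, so $V_{\eta_1} = \C\phi_1$.

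The main obstacle I anticipate is the maximum-modulus step, specifically verifying that $\phi(yg)/\phi(y)$ depends only on $g$ and assembles into a genuine homomorphism of $G^0$ that is trivial on $K$; Lemma \ref{lem2.1}(c) is precisely the tool that then converts such a homomorphism into the trivial one, and the rest of the argument is routine operator theory on the finite-dimensional space $L^2(\Ga\bs G/K)$.
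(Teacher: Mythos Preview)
Your approach is valid and parallels the paper's, but the key step---showing that a joint eigenvector $\phi$ with $|\lambda_a|=|KaK/K|$ for all $a\in A^-$ must be constant---is carried out differently. You argue pointwise via a maximum-modulus principle: at a point $x_0$ where $|\phi|$ is maximal, equality in the triangle inequality forces $|\phi(x_0k_ia)|=\|\phi\|_\infty$, and you then propagate. The paper instead works globally with the $L^2$-norm: from
\[
s\,\|\phi\|_2=|\lambda_a|\,\|\phi\|_2=\Big\|\sum_j R(k_ja)\phi\Big\|_2\le\sum_j\|R(k_ja)\phi\|_2=s\,\|\phi\|_2
\]
one gets equality in the Hilbert-space triangle inequality, so all the vectors $R(k_ja)\phi$ coincide and $R(a)\phi=(\lambda_a/s)\phi$ directly; the character $\theta=\lambda/\eta_1$ on $KA^-K$ then comes for free from the representation $R$, with no propagation needed. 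This sidesteps the one genuine soft spot in your sketch: you assert that the maximum propagates to all of $\Ga\bs G$, but this requires the \emph{semigroup} generated by $\bigcup_{a\in A^-}KaK$ (not merely the group it generates) to act transitively on the finite set $\Ga\bs G/K$. That is true---the max-set $S\subset\Ga\bs G$ is clopen and right-$K$-invariant, and for each $g\in KaK$ one has $Sg\subset S$ with $\vol(Sg)=\vol(S)$, whence $Sg=S$; so $S$ is stable under the generated group $G^0$, which acts transitively since $G=\Ga G^0$---but it deserves a sentence of justification, and the $L^2$ route avoids the issue entirely.

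On the other hand, your explicit treatment of the Jordan-block question---showing via the growth bound $\|T_a^n\|\le\eta_1(a)^n$ that $T_a$ acts semisimply on $V_{\eta_1}$, hence $V_{\eta_1}=\C\phi_1$---is a point the paper's proof glosses over. The paper's argument only shows that the \emph{actual} joint $\eta_1$-eigenspace is one-dimensional; without your spectral-radius step this does not yet exclude a nontrivial generalized eigenspace, i.e.\ some $\eta_j=\eta_1$ with $j\ge 2$. So in that respect your write-up is more complete.
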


\begin{proof}
For $1\le j\le r$ we have
\begin{align*}
\left| R(\1_{KaK})\phi(x)\right|
&\le \int_{KaK}|\phi(xy)|\,dy\le |KaK/K|\ \norm\phi_{\Ga\bs G},
\end{align*}
where $\norm\phi_{\Ga\bs G}=\sup_{x\in\Ga\bs G}|\phi(x)|$.
Suppose that $R(\1_{KaK})\phi=\eta(a)\phi$ holds for every $a\in A^-$, then, taking supremum over $x$ yields
$$
|\eta(a)|\ \norm\phi_{\Ga\bs G}\le|KaK/K|\ \norm\phi_{\Ga\bs G}.
$$
For the second assertion assume additionally $|\eta(a)|=|KaK/K|=\eta_1(a)$ for every $a\in A^-$. We have to show that $\phi$ is constant.
Let $a\in A^-$ and write $KaK=\bigsqcup_{j=1}^s k_jaK$, then $s=|KaK/K|=\eta_1(a)$ and
$$
R(\1_{KaK})\phi=\sum_{j=1}^{s}R(k_j)R(a)\phi.
$$
For the $L^2$-norm we have, since $R(k_j)R(a)$ is unitary,
\begin{align*}
s\norm\phi_2&=|\eta(a)|\norm\phi_2
=\norm{\sum_{j=1}^{s}R(k_j)R(a)\phi}_2\\
&\le\sum_{j=1}^{s}\norm\phi_2=s\norm\phi_2.
\end{align*}
We get equality everywhere, but this can only happen if the $R(k_j)R(a)\phi$ are all $\R_+$-colinear and by unitarity this means that $R(k_j)R(a)\phi=R(a)\phi$ for every $j$.
Since $s=\eta_1(a)$, follows
$$
R(a)\phi=\frac{\eta(a)}{\eta_1(a)}\phi.
$$
As $\phi$ is $K$-invariant, we have for $k_1,k_2\in K$,
$$
R(k_1 ak_2)\phi=R(k_1)R(a)R(k_2)\phi=R(k_1)R(a)\phi=\frac{\eta(a)}{\eta_1(a)}R(k_1)\phi= \frac{\eta(a)}{\eta_1(a)}\phi.
$$
Since $R$ is a representation, the map $\theta=\frac\eta{\eta_1}$ therefore extends to a character on the group generated by $KA^-K$, which contains $G^0$.
By Lemma \ref{lem2.1} we get $\theta(G^0)=1$, so for every $g^0\in G^0$ we have 
$$
\phi(g^0)=R(g_0)\phi(1)=\theta(g_0)\phi(1)=\phi(1).
$$
Finally, as $G=\Ga G^0$ we write any given $g\in G$ as $g=\ga g^0$ accordingly and we get
\begin{align*}
\phi(g)&=\phi(\ga g^0)=\phi(g^0)=\phi(1).
\end{align*}
So $\phi$ is constant, as claimed.
\end{proof}

\section{The prime geodesic theorem}
\begin{definition}
For $\ga\in\Ga^\gen$ we write $\Ind(\ga)=|G_\ga^\a/\Ga_\ga^\a|$, where $\a=\Min(\ga)$.
For $k\in\N^d$ let 
$$
N(k)=\sum_{[\ga]:\la(a_\ga)=k}\Ind(\ga),
$$
where the sum runs over all conjugacy classes $[\ga]$ in $\Ga^\gen$ such that $\la(a_\ga)=k$.
So $N(k)=0$ if $k\notin\La$, the lattice of Definition \ref{def3.1}.
\end{definition}

In the following, for $k\in\N^d$ and $c\in\R^d$ we write
$$
c^k=c_1^{k_1}\cdots c_d^{k_d}.
$$

\begin{theorem}
[Prime Geodesic Theorem]
Let $\La\subset\Z^d$ be the lattice of Definition \ref{def3.1}.
There exists a sublattice $\La'\subset \La$ and a function $C_{\La/\La'}:\La/\La'\to (0,\infty)$, and constants $c_1,\dots,c_d>1$ such that
for $k_j\to\infty$ independently, we have 
$$
N(k)\quad\sim\quad\1_\La(k)\ C_{\La/\La'}(k)\ c^k.
$$
Explicit formulae for the constants and the function $C_{\La/\La'}$ are given below.
\end{theorem}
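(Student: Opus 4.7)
The starting point is the identity
$$
S(u)=\sum_{[\ga]\in[\Ga^\gen]}|G_\ga^\a/\Ga_\ga^\a|\,u^{a_\ga}=\sum_{k\in\La^+}N(k)\,u^k,
$$
identifying $N(k)$ with the $k$-th Taylor coefficient of $S(u)$ at the origin. Expanding each factor $(1-\eta_j(a_i)u^{a_i})^{-1}=\sum_{m\ge 0}\eta_j(a_i)^mu^{ma_i}$ and letting $V$ denote the $d\times d$ integer matrix with columns $\la(a_1),\dots,\la(a_d)$ and $\La'=V\Z^d\subset\La$, the pair $(j,e)$ contributes to the coefficient of $u^k$ the value
$$
\eta_j(e)\sum_{\substack{n\in\Z_{\ge 0}^d\\ Vn=k-\la(e)}}\prod_{i=1}^d\eta_j(a_i)^{n_i},
$$
which for $k-\la(e)$ deep enough in the positive cone collapses to the unique term $n=V^{-1}(k-\la(e))$ and vanishes unless $k-\la(e)\in\La'$. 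The asymptotics will be read off by isolating the dominant $j=1$ contribution and treating the rest as error.

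For $j=1$, Lemma \ref{lem3.2} gives $\eta_1(a_i)=|Ka_iK/K|$ and shows that $\eta_1$ is a positive character of $A$. Transporting it along $\la$ to a character of $\La$ and using that $\R_{>0}$ is divisible together with $[\Z^d:\La]<\infty$, one extends $\eta_1$ to a character of $\Z^d$ of the form $k\mapsto c^k=c_1^{k_1}\cdots c_d^{k_d}$; the positivity of $\eta_1$ on $A^-$ then forces $c_j>1$ for every $j$. Multiplicativity now collapses $\eta_1(e)\prod_i\eta_1(a_i)^{n_i}$ to $c^k$ whenever $k\equiv\la(e)\pmod{\La'}$, so summing over $e\in E$ produces a main term of the form $c^k\cdot C_{\La/\La'}(k)$ depending only on the coset $k+\La'$.

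For $j\ge 2$, if some $\eta_j(a_i)=0$ then the corresponding summand is a polynomial in $u$, contributing only $O(1)$ which is negligible against $c^k$. Otherwise $\eta_j$ is a genuine character, and Lemma \ref{lem3.2} supplies some $b\in A^-$ with $|\eta_j(b)|<\eta_1(b)$. Writing $|\eta_j|/\eta_1=\exp\(\sum_i\mu_{j,i}\la_i(\cdot)\)$, the cone-wide bound $|\eta_j|\le\eta_1$ forces $\mu_{j,i}\le 0$ for every $i$, while the strict inequality at $b$ gives $\mu_{j,i^*}<0$ for at least one $i^*$. The $j$-th contribution to $N(k)$ is therefore bounded in absolute value by $c^k\exp(\mu_{j,i^*}k_{i^*})=o(c^k)$ as $k_{i^*}\to\infty$, and summing the finitely many such contributions completes the proof of $N(k)\sim\1_\La(k)\,C_{\La/\La'}(k)\,c^k$.

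The delicate point is this last step. Lemma \ref{lem3.2} only guarantees strict inequality at a single point $b\in A^-$, yet the ratio $|\eta_j(k)|/c^k$ must vanish as $k$ tends to infinity through the positive cone in arbitrary fashion. Passing to logarithms converts the single-point inequality into strict negativity of a linear form along \emph{some} coordinate direction $i^*$, and the hypothesis that every $k_j\to\infty$ independently is precisely what makes decay in that one direction sufficient to kill the subdominant contributions uniformly; a secondary bookkeeping issue is checking that the counting function $C_{\La/\La'}$ that falls out of summing over $e\in E$ is indeed everywhere positive, which is where the explicit formulae announced in the statement come in.
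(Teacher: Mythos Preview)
Your proof follows exactly the paper's route: expand the rational-function expression for $S(u)$, identify the coefficient $N(k)$ as a sum $\sum_j N_j(k)$, and show that $N_1(k)$ dominates using Lemma~\ref{lem3.2}. Your treatment of the subdominant terms via the log-linear form $\sum_i \mu_{j,i} k_i$ is correct and in fact makes explicit a step the paper passes over (namely, that strict inequality at \emph{some} $a\in A^-$ forces strict inequality already at one of the distinguished elements $a_\nu$).

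There is, however, one genuine gap in the main-term analysis. You assert that for $k-\la(e)$ deep enough in the positive cone the sum over $n\in\Z_{\ge 0}^d$ with $Vn=k-\la(e)$ collapses to the single term $n=V^{-1}(k-\la(e))$, and hence that the resulting $C_{\La/\La'}(k)$ depends only on $k\bmod\La'$. But for a general invertible $V$ whose columns lie in the open positive orthant, $V^{-1}$ need not carry the positive orthant into itself: the cone $V\R_{\ge 0}^d$ is a proper subcone of $\R_{\ge 0}^d$, and for $k$ outside it (which can happen with every $k_j$ arbitrarily large) the sum is empty, so $N_1(k)=0$ and no asymptotic follows. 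The paper closes this gap by invoking Lemma~2.4.4 of \cite{BL}, which shows that each $a_\nu$ is a positive multiple of $v_\nu$; thus $V$ is diagonal with positive diagonal entries, and $V^{-1}(k-\la(e))\ge 0$ becomes simply $k_\nu\ge\la_\nu(e)$ for every $\nu$, automatic once all $k_\nu$ are large. With that single fact supplied your argument goes through, and the positivity of $C_{\La/\La'}$ on $\La/\La'$ then reduces, as both you and the paper note, to the set $\{\la(e):e\in E\}$ meeting every coset of $\La'$ in $\La$, which is part of the input from \cite{BL}.
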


\begin{proof}
We have
\begin{align*}
S(u)&=\sum_{k\in\N^d}N(k)u^k\\
&= \sum_{j=1}^r\sum_{e\in E}\sum_{m\in\N_0^d}\eta_j(e)\eta_j(a_1)^{m_1}\cdots\eta_j(a_d)^{m_d} u^{\la(m\cdot a+e)},
\end{align*}
where $m\cdot a$ stands for $m_1a_1+\dots+m_d a_d$.
This implies
$$
N(k)=\sum_{j=1}^r\underbrace{\sum_{\substack{e\in E\\ m\in\N^d\\ \la(a\cdot m+e)=k}}\eta_j(e)\eta_j(a_1)^{m_1}\cdots\eta_j(a_d)^{m_d}}_{=N_j(k)}.
$$
We shall show that the term $N_1(k)$ dominates as $k\to\infty$.
To simplify the notation  we write $\eta(x)=\eta_1(x)=|KxK/K|$.
The proof of Lemma 2.4.4 in \cite{BL} reveals that each $a_j$ is a multiple of $v_j$ and hence the equation $\la(a\cdot m+e)=k$ implies $\la_\nu(a_\nu)m_\nu+\la_\nu(e)=k_\nu$, $\nu=1,\dots,d$, so that
$$
m_\nu=\frac{k_\nu-\la_\nu(e)}{\la_\nu(a_\nu)}.
$$
Let $\La'$ be the sublattice of $\La$ generated by $\la(a_1),\dots,\la(a_d)$.
For each $k\in\N^d$ there is at most one  $e\in E$ with $\la(a\cdot m+e)=k$.
This element $e$ only depends on $k$ up to $\La'$.
We get
$$
N_1(k)=\eta(a_1)^{\frac{k_1}{\la_1(a_1)}}
\cdots \eta(a_d)^{\frac{k_d}{\la_1(a_d)}}
\sum_{\substack{e\in E\\ m\in\N^d\\ \la(a\cdot m+e)=k}} \eta(e)\eta(a_1)^{\frac{\la_1(e)}{\la_1(a_1)}}
\cdots
\eta(a_d)^{\frac{\la_d(e)}{\la_d(a_d)}}.
$$
The sum
$$
C_{\La/\La'}(k)=\sum_{\substack{e\in E\\ m\in\N^d\\ \la(a\cdot m+e)=k}} \eta(e)\eta(a_1)^{\frac{\la_1(e)}{\la_1(a_1)}}
\cdots
\eta(a_d)^{\frac{\la_d(e)}{\la_d(a_d)}}
$$
has at most one summand and depends on $k$ only up to $\La'$ and is non-zero if and only if $k\in\La$.
Setting $c_\nu=\eta(a_\nu)^{\frac1{\la_\nu(a_\nu)}}$ we get the desired asymptotic for $N_1(k)$ instead of $N(k)$.

By Lemma \ref{lem3.2}, for each $j=2,\dots,r$ there exists $\nu(j)$ such that $|\eta_j(a_{\nu(j)})|<\eta(a_{\nu(j)})$.
Set
$$
\theta=\max_{j\ge 2}\frac{|\eta_j(a_{\nu(j)})|}{\eta(a_{\nu(j)})}<1.
$$
It then follows that
$$
|N_2(k)+\dots+N_r(k)|\le\(\sum_{j=2}^r\theta^{k_{\nu(j)}}\)N_1(k).
$$
This implies that $N_1(k)$ dominates the asymptotic and thus we get the claim for $N(k)$.
\end{proof}

\section{Division algebras}
Let $R$ be an integral domain and $K$ its field of fractions.
Let $A$ be a finite-dimensional $K$-algebra with unit.
An \e{$R$-order} in $A$ is an $R$-subalgebra $\Lambda$ of $A$, which is finitely generated as $R$-module and spans the $K$-vector space $A$, i.e., $K\La=A$.

Now assume that $K$ is a global field of positive characteristic and $R\subset K$ is a Dedekind domain with fraction field $K$.
Then $K$ is the function field of a curve $\CC$ over a finite field $k$. An example of a possible ring $R$ would be the coordinate ring of the affine curve $\CC\sm\{\infty\}$, where $\infty\in\CC$ is a rational closed point

If the $K$-algebra $A$ is a global field $F$ over $K$ and $\CO$ is an $R$-order in $F$, let $I(\CO)$ denote the set of all finitely generated $\CO$-submodules of $F$.
By the Jordan-Zassenhaus Theorem \cite{Reiner}, the set $[I(\CO)]$ of isomorphism classes of elements of $I(\CO)$ is finite.
Let $h(\CO)$ be its cardinality, called the \e{class number} of the order $\CO$.

If $F$ is a global field over $K$, there is a maximal $R$-order $\CO_F$, which is the integral closure of $R$ in $F$, and wich contains any other order in $F$.
The same applies in the local situation, if $F$ is a finite extension of $K_v$ for a place $v$, the integral closure $\CO_F$ of $R_v$ is a maximal $R_v$-order containing every other order.
We say that an $R$-order $\CO$ of $F$ is \e{maximal at $v$}, if $\CO\otimes_{R}R_v$ is the maximal order of $F_v$.

Let $d\in\N$ be such that $d+1$ is a prime number and let $D$ denote a  division algebra over $K$ \cite{Pierce} of dimension $(d+1)^2$. Let $D(R)$ denote a fixed maximal $R$-order in $D$.
Note that all maximal orders in $D$ are conjugate \cite{Reiner}.
For any $R$-algebra $A$ we define
$$
D(A)=D(R)\otimes_R A.
$$
Then $D(K)$ is canonically isomorphic to $D$.
For almost all places $v$, one has $D(K_v)\cong \M_d(K_v)$, where $\M_d(E)$ denotes the algebra of $d\times d$-matrices over a field $E$, \cite{Pierce}.
If $D(K_v)\cong \M_d(K_v)$, we say that $D$ \e{splits at $p$}.
If $D$ doesn't split at $p$, then $D(K_v)$ is a division algebra over $K_v$.
This latter fact rests on the choice of the degree $d$ to be a prime.

Let $S$ be the finite set of all places, at which $D$ doesn't split.

\begin{lemma}\label{lem1.1}
Let $A\subset D$ be a $K$-subalgebra.
Then the dimension of $A$ is 1,$d$ or $d^2$.
In the first case $A=K$, in the last $A=D$.
In the remaining case $A$ is a field extension of $K$ of degree $d$, such that every place $v\in S$ is non-decomposed in $A$, i.e., there is only one place of $A$ above $v$.
Every  field of degree $d$ over $K$, satisfying these conditions occurs as a subalgebra of $D$.
\end{lemma}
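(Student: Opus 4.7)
The plan is to first reduce to the case where $A$ is a division algebra: since $D$ has no zero divisors, $A$ inherits this property, and being finite-dimensional over $K$, multiplication by any nonzero element of $A$ is a $K$-linear injection $A\to A$, hence a bijection, making $A$ a division algebra. Next I apply the double centralizer theorem: $A$ is a simple subalgebra of the central simple $K$-algebra $D$, so $\dim_K A\cdot\dim_K Z_D(A)=\dim_K D=d^2$ and in particular $\dim_K A$ divides $d^2$. Since $d$ is prime, the only possibilities are $\dim_K A\in\{1,d,d^2\}$, with the extremes immediately forcing $A=K$ and $A=D$.

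For the middle case $\dim_K A=d$, I claim $A$ is forced to be commutative. Its center $Z(A)$ is a field extension of $K$ of some degree $e$, and $A$ is central simple over $Z(A)$ of dimension $f^2$ for some $f\ge 1$, giving $d=ef^2$. Since $d$ is prime and hence not a perfect square, the only option is $e=d$, $f=1$; that is, $A=Z(A)$ is a field extension of $K$ of degree $d$. To see every $v\in S$ is non-decomposed in $A$, tensor the embedding $A\hookrightarrow D$ with $K_v$ over $K$: the target $D\otimes_K K_v$ is a division algebra by the defining property of $S$, so its subalgebra $A\otimes_K K_v\cong\prod_{w\mid v}A_w$ is a domain. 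A nontrivial product of rings has zero divisors, so this is possible only if exactly one place $w$ of $A$ lies above $v$.

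For the converse, given a degree-$d$ field extension $F/K$ in which every $v\in S$ is non-decomposed, I would use the classical criterion that $F$ embeds as a maximal subfield of $D$ if and only if $D\otimes_K F$ splits as a central simple $F$-algebra. By the Albert--Brauer--Hasse--Noether local-global principle, valid for global fields of positive characteristic, splitting can be detected place by place on $F$. For a place $w$ of $F$ above a place $v$ of $K$: if $v\notin S$ then $D_v$ is already split, so any base change to $F_w$ remains split; if $v\in S$, by hypothesis $w$ is the unique place above $v$ and $[F_w:K_v]=d$, and since $D_v$ has Hasse invariant of exact order $d$ in $\mathrm{Br}(K_v)$, restriction to the degree-$d$ extension $F_w$ multiplies this invariant by $d$ and hence kills it. So $D\otimes_K F$ splits at every place of $F$, hence is globally split, and $F$ embeds into $D$.

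The main obstacle is the converse direction, which relies on the local-global principle for central simple algebras over global function fields together with the classification of local division algebras by their Hasse invariants and the equivalence between the embedding condition and the splitting of $D\otimes_K F$. None of these ingredients is hard in isolation, but care is required to cite the positive-characteristic versions; this is the step I expect to spend the most effort verifying in the references.
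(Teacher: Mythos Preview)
Your argument is correct and is precisely the standard one: the paper itself gives no proof beyond the sentence ``This lemma is standard. It can be pieced together from the information in Pierce's book,'' so there is no approach to compare against---you have simply supplied the details the authors omit. One small remark: your decomposition $A\otimes_K K_v\cong\prod_{w\mid v}A_w$ presumes $A/K$ is separable; in the (edge) inseparable case you can instead argue directly that $A\otimes_K K_v$ is a finite-dimensional $K_v$-algebra embedding into the division ring $D_v$, hence an Artinian domain, hence a field, which again forces a unique place above $v$.
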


\begin{proof}
This lemma is standard. It can be pieced together from the information in Pierce's book \cite{Pierce}.
\end{proof}

Let $v$ be a place not in $S$.
Let $F/K$ be a field extension of degree $d$ which embeds into $D(K)$.
Then for any embedding $\sigma:F\hookrightarrow D(K)$ the set
$$
\CO_\sigma=\sigma^{-1}\(D(R)\)
$$
is an $R$-order in $F$.

\begin{lemma}\label{lem1.2}
Let $v$ be a place not in $S$.
Let $\sigma:F\hookrightarrow D(K)$ be a $K$-embedding of the degree $d$ field $F/K$.
Then for any $w\in S$, the order $\CO_{\sigma,w}=\CO_\sigma\otimes_{R}R_w$ is maximal in the local field $F_{w}$.
Conversely, let $\CO\subset F$ be an $R$-order such that for any $w\in S$ the order $\CO_{w}$ is maximal, then there exists an embedding $\sigma$ such that $\CO=\CO_\sigma$.
\end{lemma}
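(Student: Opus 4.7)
For the forward implication, note that at any $w\in S$ the algebra $D(K_w)$ is a local division algebra over $K_w$, hence admits $D(R_w)$ as its unique maximal $R_w$-order (the ring of elements with integral reduced norm). The given embedding extends to $\sigma_w:F_w\hookrightarrow D(K_w)$, and $F_w$ is a field by the non-decomposition statement in Lemma \ref{lem1.1}. Since an element of $F_w$ is integral over $R_w$ iff its image in $D(K_w)$ is integral, $\CO_{\sigma,w}=\sigma_w^{-1}(D(R_w))$ equals the maximal order $\CO_{F_w}$, as claimed.

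For the converse the plan is to fix any $K$-embedding $\tau:F\hookrightarrow D$, which exists by Lemma \ref{lem1.1}, and then to construct a maximal $R$-order $\Lambda\subset D$ with $\tau^{-1}(\Lambda)=\CO$. Granted such a $\Lambda$, the cited conjugacy of maximal $R$-orders in $D$ furnishes some $g\in D^\times$ with $\Lambda=g^{-1}D(R)g$, and then $\sigma(x):=g\tau(x)g^{-1}$ is an embedding satisfying $\sigma^{-1}(D(R))=\tau^{-1}(\Lambda)=\CO$.

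The order $\Lambda$ is built by specifying its completions and gluing. At $w\in S$ take $\Lambda_w=D(R_w)$, the unique maximal local order; the hypothesis $\CO_w=\CO_{F_w}$ combined with the forward direction yields $\tau_w^{-1}(\Lambda_w)=\CO_w$. At $w\notin S$ one has $D(K_w)\cong\M_{d+1}(K_w)$ acting on a $(d+1)$-dimensional $K_w$-vector space $V$, which via $\tau_w$ becomes a faithful $F_w$-module of matching $K_w$-dimension, hence free of rank one over $F_w$. An $F_w$-linear isomorphism $V\cong F_w$ identifies $D(K_w)$ with $\operatorname{End}_{K_w}(F_w)$ and $\tau_w$ with left multiplication, so the choice $\Lambda_w:=\operatorname{End}_{R_w}(\CO_w)$ gives a maximal $R_w$-order (using that $\CO_w$ is free over $R_w$) satisfying $\tau_w^{-1}(\Lambda_w)=\{x\in F_w:x\CO_w\subset\CO_w\}=\CO_w$. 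Since $\CO$ and $\tau^{-1}(D(R))$ are $R$-orders in $F$, they agree at almost all places, so $\Lambda_w=D(R_w)$ for almost all $w$, and the family $(\Lambda_w)$ glues to a unique $R$-order $\Lambda\subset D$ with these completions; it is maximal because each $\Lambda_w$ is.

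The main obstacle is the local construction at split places. The identification $V\cong F_w$ is determined only up to an $F_w^\times$-scalar, but rescaling merely replaces $\CO_w$ by a fractional $\CO_w$-ideal whose left order is still $\CO_w$, so $\Lambda_w\cap\tau_w(F_w)$ is independent of the choice. A related subtlety, visible when $F_w$ is only étale over $K_w$ rather than a field, is the rank-one assertion: each primitive idempotent of $F_w$ must act on a summand of $V$ of matching $K_w$-dimension, which follows from a dimension count using faithfulness of $V$ over each simple factor of $F_w$.
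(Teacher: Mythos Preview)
Your argument is correct and follows precisely the local--global strategy of Lemma~2.2 in \cite{class}, which is what the paper invokes as its proof: maximality at ramified places from uniqueness of the local maximal order, construction of a suitable maximal order at split places via endomorphism rings of lattices, and then conjugacy of global maximal orders to pass from $\Lambda$ back to $D(R)$.

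One line should be sharpened. From the fact that $\CO_w=\tau_w^{-1}(D(R_w))$ at almost all $w$ it does not literally follow that your $\Lambda_w=D(R_w)$ there, since two distinct maximal orders in $\M_{d+1}(K_w)$ can intersect $\tau_w(F_w)$ in the same subring, and your $\Lambda_w=\operatorname{End}_{R_w}(\CO_w)$ depends on the non-canonical identification $V\cong F_w$. The fix is already implicit in your last paragraph: at the cofinitely many $w$ where $\CO_w=\CO_{\tau,w}=\CO_{F_w}$, the lattice $L_w\subset V$ with $D(R_w)=\operatorname{End}_{R_w}(L_w)$ is $\CO_{F_w}$-stable and hence free of rank one over $\CO_{F_w}$, so one may choose the identification to carry $L_w$ to $\CO_w$, giving $\Lambda_w=D(R_w)$ on the nose; equivalently, simply set $\Lambda_w:=D(R_w)$ at those places and reserve the endomorphism-ring construction for the finitely many remaining split $w$.
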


\begin{proof}
Analogous to the proof of Lemma 2.2 of \cite{class}.
\end{proof}

For a given degree $d$  field extension $F/K$ which embeds into $D(K)$, let $S_\inert(F)$ be the set of all $w\in S$ which are inert in $F$.
Define the \e{$S$-inertia degree} by
$$
f_S(F)=\prod_{w\in S}f_w(F)=d^{|S_\inert(F)|},
$$
where $f_w(F)$ is the inertia degree of $w$ in $F$.
For any order $\CO$ of $F$ let
$$
f_S(\CO)=f_S(F).
$$
Let $\CO$ be a $R$-order in $F$, which is maximal at all $w\in S$.
By Lemma \ref{lem1.2} there exists an embedding $\sigma:F\to D(K )$ such that $\CO=\CO_\sigma$.
Let $u\in D(R)^\times$ and let $^u\sigma$ be the embedding given by $^u\sigma(x)=u\sigma(x)u^{-1}$.
Then $\CO_{^u\sigma}=\CO_\sigma$, so the group $D(R)^\times$ acts on the set $\Sigma(\CO)$ of all $\sigma$ with $\CO_\sigma=\CO$.

\begin{lemma}\label{lem1.3}
The quotient $\Sigma(\CO)/D(R)^\times$ is finite and has cardinality 
$$
\left|\Sigma(\CO)/D(R)^\times\right|=f_S(\CO)h(\CO).
$$
\end{lemma}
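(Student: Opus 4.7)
The strategy is the classical Eichler--Shimura approach to counting optimal embeddings of orders into central simple algebras, adapted to the positive-characteristic division-algebra setting.

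First I would reduce the count to a double coset problem. Fix $\sigma_0\in\Sigma(\CO)$, which exists by Lemma~\ref{lem1.2}. Since $\sigma_0(F)$ is a maximal commutative subfield of $D$ (its $K$-dimension equals the degree of $D$), Skolem--Noether gives every embedding of $F$ into $D(K)$ in the form $u\sigma_0u^{-1}$ with $u\in D(K)^\times$ unique modulo the centralizer $\sigma_0(F)^\times$. Translating the condition $\CO_\sigma=\CO$ and identifying $\sigma_0(F)$ with $F$ yields a bijection
$$
\Sigma(\CO)/D(R)^\times\;\longleftrightarrow\; D(R)^\times\backslash X/F^\times,\quad X=\{u\in D(K)^\times:u^{-1}D(R)u\cap F=\CO\}.
$$

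Next I would carry out the local analysis. For $v\in S$, the algebra $D(K_v)$ is a local division algebra with unique maximal order $D(R_v)$, so conjugation acts trivially on $D(R_v)$; by the $S$-maximality of $\CO$ one obtains $X_v=D(K_v)^\times$, and a direct computation with the normalized valuation $v_D$ on $D(K_v)^\times$ (using that $v_D$ extends $d\cdot v$ on $K_v^\times$ and that $e_w f_w=d$) identifies the image of $F_v^\times/\CO_{F,v}^\times$ in $D(K_v)^\times/D(R_v)^\times\cong\Z$ with $f_v(F)\Z$, whence $[D(K_v)^\times:F_v^\times D(R_v)^\times]=f_v(F)$. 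The $S$-contribution to the count is thus $\prod_{v\in S}f_v(F)=f_S(\CO)$. For $v\notin S$, $D(K_v)$ is a matrix algebra with $D(R_v)$ a hyperspecial maximal order, and the corresponding local classes will be absorbed into the global assembly.

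Finally I would assemble the local data globally. Using strong approximation for $\SL_1(D)$, valid in positive characteristic since $D$ splits at some place outside $S$, the natural map
$$
D(R)^\times\backslash X/F^\times\;\longrightarrow\;\prod_{v\in S}D(K_v)^\times/\bigl(F_v^\times D(R_v)^\times\bigr)
$$
is surjective and its kernel is parametrized by the Picard group of the order $\CO$: two elements of $X$ with identical $S$-local data differ by an adelic element of $\widehat F^\times\cdot\widehat{D(R)}^\times$, and the obstruction to realizing this difference by elements of $F^\times$ and $D(R)^\times$ globally --- using triviality of the one-sided ideal class number of the maximal order $D(R)$ --- is precisely an ideal class of $\CO$. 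The resulting short exact sequence
$$
1\to\operatorname{Pic}(\CO)\to D(R)^\times\backslash X/F^\times\to\prod_{v\in S}D(K_v)^\times/\bigl(F_v^\times D(R_v)^\times\bigr)\to 1
$$
has outer terms of cardinalities $h(\CO)$ and $f_S(\CO)$, proving the claim. The main technical obstacle is this last step: reconciling strong approximation for $\SL_1(D)$, the class number of the maximal order $D(R)$, and the non-maximality of $\CO$ away from $S$ to see that all contributions collapse precisely to $h(\CO)$; this is analogous to the number-field computation in \cite{class}.
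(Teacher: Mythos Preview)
Your reduction to the double coset $D(R)^\times\backslash X/F^\times$ is identical to the paper's (the paper calls this set $U$), and your local computation at places in $S$ matches the paper's part~(b). The overall strategy---localize, analyze each place, and reassemble via strong approximation---is the same.

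The organization of the count, however, is genuinely different. The paper defines an explicit map
$$
\psi:D(R)^\times\backslash U/F^\times\longrightarrow I(\CO)/F^\times,\qquad u\longmapsto I_u=F\cap D(R)u,
$$
proves directly that $\psi$ is surjective (by an explicit local construction of $\tilde u_w$ for each ideal, then strong approximation), and shows that each fiber has size $f_S(\CO)$ by checking that the localized $\psi_w$ is injective for $w\notin S$ and $f_w(F)$-to-one for $w\in S$. You instead fiber the other way: you map to $\prod_{v\in S}D(K_v)^\times/F_v^\times D(R_v)^\times$ and identify the kernel with $\operatorname{Pic}(\CO)$. Both decompositions give $f_S(\CO)\cdot h(\CO)$, but the paper's version makes the link to ideal classes completely concrete (one literally sees the ideal $F\cap D(R)u$), whereas your version packages the non-$S$ contribution more abstractly.

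Two points where your write-up is thinner than it should be. First, your treatment of $v\notin S$ (``will be absorbed into the global assembly'') skips real content: the paper spends part~(a) proving local injectivity of $\psi_w$ there via elementary divisors in $\M_d(R_w)$, and in your framework this is exactly what is needed to show that the fiber over a fixed $S$-datum is controlled purely by $\operatorname{Pic}(\CO)$ rather than by something larger. Second, your ``short exact sequence'' is only a sequence of pointed sets, so the assertion that every fiber is a $\operatorname{Pic}(\CO)$-torsor requires an argument, not just exactness at the middle term; the paper sidesteps this by working with the explicit map $\psi$ and counting fiber sizes directly. Your invocation of the triviality of the one-sided class number of $D(R)$ is the same ingredient the paper uses implicitly when it writes ``by strong approximation there is $u\in D(K)^\times$ with $D(\hat R)u=D(\hat R)\tilde u$''.
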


Compare Lemma 2.3 in \cite{class}.

\begin{proof}
Fix an embedding $F\hookrightarrow D(K )$ and consider $F$ as a subfield of $D(K )$ such that $\CO=F\cap D(R)$. 
For $u\in D(K )^\times$ let
$$
\CO_u= F\cap u^{-1} D(R)u.
$$
Let $U$ be the set of all $u\in D(K )^\times$ such that $\CO_u=\CO$, i.e.,
$$
F\cap D(R)=F\cap u^{-1}D(R)u.
$$
Then $F^\times$ acts on $U$ by multiplication from the right and $D(R)^\times$ acts by multiplication from the left.
One has
$$
|D(R)^\times\bs U/F^\times|=|D(R)^\times\bs \Sigma(\CO)|.
$$
So we have to show that the left hand side equals $f_S(\CO)h(\CO)$.
For $u\in U$ let
$$
I_u=F\cap D(R)u.
$$
Then $I_u$ is a finitely generated $\CO$-module in $F$.
We claim that the map
\begin{align*}
\psi:D(R)^\times\bs U/F^\times &\to I(\CO)/F^\times,\\
u&\mapsto I_u
\end{align*}
is surjective and $h(\CO)$ to one.
We show this by localization and strong approximation.
For any place $w\ne v$ let $U_w$ be the set of all $u_w\in D(K_w)$ such that $\CO_w=F_w\cap D(R_w)= F_w\cap u_w^{-1}D(R_w)u_w$.
We have to show the following:
\begin{enumerate}[\rm (a)]
\item For $w\notin S$, the localized map $\psi_w:D(R_w)^\times\bs U_w/F_w^\times\to I(\CO_w)/F_w^\times$ is injective,
\item for $w\in S$, the map $\psi_w$ is $f_w(F)$ to one,
\item the map $\psi$ is surjective.
\end{enumerate}
For (a) let $w\notin S$, $u_w,v_w\in U_w$ and assume
$$
F_w\cap D(R_w)u_w=F_w \cap D(R_w)v_w.
$$
Let $z_w=v_wu_w^{-1}$.
Elementary divisor theory implies that there exist
$x,y\in D(R_w)^\times=\M_d(R_w)^\times$ such that
$$
z_w=x\mathrm{diag}(\pi^{k_1},\pi^{k_2})y
$$
holds, where $k_1\le k_2$ and $\pi$ is a uniformizer for $w$.
Replacing $u_w$ by $yu_w$ and $v_w$ by $x^{-1} v_w$ we may assume that $z_w$ equals the diagonal matrix.
The assumptions then  imply $k_1=0=k_2$, which gives the first claim.
For (b) let $w\in S$ and recall that $F_w$ is a local field, so $h(\CO_w)=1$.
Hence the claim is equivalent to
$$
|D(R_w)^\times\bs D(K _w)^\times/F_w^\times|=f_w(F).
$$
Taking the valuation $w$ of the reduced norm, one sees that the left hand side equals $d$ if $F_w$ is unramified over $K _w$ and $1$ otherwise, i.e., it equals the inertia degree $f_w(F)$ as claimed.

Finally, for the surjectivity of $\psi$ let $I\subset\CO$ be an ideal.
We show that there is $u\in D(K )^\times$ such that
$$
F\cap u^{-1} D(R)u= F\cap D(R)
$$
and 
$$
I=I_u=F\cap D(R)u.
$$
We do this locally.
First note that, since $I$ is finitely generated, there is a finite set $T$ of places with $T\cap S=\emptyset$ and $v\notin T$ such that for any $w\notin T\cup S$ the completion $I_w$ equals $\CO_w$ which is the maximal order of $F_w$.
For these $w$ set $\tilde u_w=1$.
Next let $w\in S$ and write $v_w$ for the unique place of $F$ over $w$.
Then $\CO_w$ is maximal, so is the valuation ring to $v_w$ and $I_w=\pi_w^k\CO_w$ for some $k\ge 0$, where $\pi_w$ is a uniformizer at $v_w$.
In this case set $\tilde u_w=\pi_w^k$.

Next let $w\in T$. Then $D(R_w)=\M_d(R_w)$. Let $\ol{\CO_w}=\CO_w/\pi_w\CO_w$ and $\ol{I_w}=I_w/\pi_wI_w$.
Then $\ol{\CO_w}$ is a commutative algebra over the field $\F_w=R_w/\pi_wR_w$, which implies that $\ol{\CO_w}\cong\bigoplus_{i=1}^s F_i$, where each $F_i$ is a finite field extension of $\F_w$.
Let $n_i$ be its degree.
Then there is an embedding $\ol{\CO_w}\hookrightarrow \M_d(\F_w)$ whose image lies in $\M_{n_1}(\F_w)\times\dots\times\M_{n_s}(\F_w)$. 
By the Skolem-Noether Theorem there is a matrix $\ol S\in \GL_d(\F_w)$ such that $\ol S\,\ol{\CO_w}\,\ol S^{-1}\subset \M_{n_1}(\F_w)\times\dots\times\M_{n_s}(\F_w)$.
The $\ol{\CO_w}$-ideal $\ol{I_w}$ must be of the form
$$
\ol{I_w}=\bigoplus_{i=1}^s\eps_i F_i,
$$
where $\eps_i\in\{ 0,1\}$.
Let $S$ be a matrix in $\GL_d(R_w)$ which reduces to $\ol S$ modulo $\pi_w$ and let $\tilde u_w=S^{-1}(l^{\eps_1}\Id_{n_1}\times\dots\times l^{n_s}\Id_{n_s})S$ in $\M_d(R_w)$.
By abuse of notation we also write $\tilde u_w$ for its reduction modulo $l$.
Then we have
$$
\ol{I_w}=\ol{\CO_w}\cap\M_d(\F_w)\tilde u_w.
$$
Let 
$$
I_{\tilde u_w}= F\cap D(R_w)\tilde u_w.
$$
Then it follows that
$$
\ol{I_w}\cong \ol{I_{\tilde u_w}}=I_{\tilde u_w}/\pi_wI_{\tilde u_w}
$$
and by Theorem 18.6 of \cite{Pierce} we get that $I_w\cong I_{\tilde u_w}$, which implies that there is some $\la\in F_w$ with $I_w=I_{\tilde u_w}\la$.
Replacing $\tilde u_w$ by $\tilde u_w\la$ and setting $\tilde u=(\tilde u_w)_w\in D(\A_\fin)$ we get
$$
I=F\cap D(R)\tilde u.
$$
By strong approximation there is an element $u\in D(K )^\times$ such that $D(\hat R)u=D(\hat R)\tilde u$ and therefore $I=I_u$.
\end{proof}

\section{Class numbers}

For any ring $R$ we write $\det:D(R)\to R$ for the reduced norm.
Note that this convention is compatible with the determinant, as for every field $F$, over which $D$ splits, the reduced norm equals the determinant.
We want to construct a group scheme $\CG$ over $R$ such that $\CG(F)=D(F)^\times/F^\times$ holds for every field.
Note that $D(R)$ is a free $R$-module of rank $d^2$.
Let $v_1,\dots v_{d^2}$ be a basis and note that the reduced norm
$\det(X_1v_1+\dots+X_{d^2}v_{d^2})$ is a homogeneous polynomial of degree $2$ in the variables $X_1,\dots,X_{d^2}$.
The group scheme $D^\times$ is given by the coordinate ring
$$
\CO_{D^\times}=R[X_1,\dots,X_{d^2},Y]/\(\det(X_1v_1+\dots+X_{d^2}v_{d^2})Y-1\).
$$
Now $\GL_1$ acts on $\CO_{D^\times}$ by
$$
\al f(X_1,\dots,X_{d^2},Y)=f(\al X_1,\al X_2,\dots,\al X_{d^2},\al^{-2}Y)
$$
and the coordinate ring we need is the ring of invariants
$$
\CO_\CG=\(\CO_{D^\times}\)^{\GL_1},
$$
which is the subring generated by the elements $X_iX_iY$ with $1\le i\le j\le d^2$.

\begin{lemma}
The ring $\CO_\CG$ is the coordinate ring of an affine group scheme $\CG$ over $R$ such that for every factorial ring $R'/R$ one has
$$
\CG(R')= D(R')^\times/R'^\times.
$$
\end{lemma}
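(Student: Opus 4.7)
The plan is to carry this out in three stages. First, I would verify that $\CO_\CG$ is a finitely generated $R$-algebra, so that $\CG=\operatorname{Spec}(\CO_\CG)$ is a bona fide affine $R$-scheme. The $\GL_1$-action naturally grades $\CO_{D^\times}$ by placing each $X_i$ in degree $1$ and $Y$ in degree $-2$, and $\CO_\CG$ is precisely the degree-zero piece. Using the defining relation $\det(\sum_i X_i v_i)\,Y=1$ in $\CO_{D^\times}$ to eliminate excess powers of $Y$, any degree-zero monomial reduces to a polynomial in the generators $X_i X_j Y$ with $1\le i\le j\le d^2$. Next I would observe that $\GL_1\hookrightarrow D^\times$ embeds as the scheme of scalar elements and hence lies in the center of $D^\times$, making it a normal subgroup scheme; therefore the Hopf algebra structure maps $\Delta$, $\varepsilon$, $S$ on $\CO_{D^\times}$ carry $\GL_1$-invariants to $\GL_1$-invariants and restrict to $\CO_\CG$, which I would verify directly on the generators $X_iX_jY$ using the matrix coproduct.

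For the main identification, let $R'/R$ be a factorial ring. I would appeal to the short exact sequence of fppf sheaves of groups
$$
1\to\GL_1\to D^\times\to\CG\to 1,
$$
whose right exactness reflects the standard fact that, since $\GL_1$ acts freely on $D^\times$ by scalar multiplication, the map $D^\times\to\CG$ is an fppf $\GL_1$-torsor. Taking $R'$-points yields the exact sequence
$$
1\to R'^\times\to D(R')^\times\to\CG(R')\to H^1_{\mathrm{fppf}}(\operatorname{Spec} R',\GL_1)=\operatorname{Pic}(R'),
$$
the final identification being Hilbert~90. Factoriality of $R'$ forces $\operatorname{Pic}(R')=0$, since every finitely generated rank-one projective module over a UFD is free. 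Hence the middle arrow is surjective and $\CG(R')\cong D(R')^\times/R'^\times$ as required. Note also that an $R'$-point of $D^\times$ is just an element $x\in D(R')$ with $\det(x)\in R'^\times$, and since $R'$ is commutative this coincides with $D(R')^\times$.

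The main obstacle is the last step: interpreting the failure of a point of $\CG(R')$ to lift to $D(R')^\times$ as the class of a $\GL_1$-torsor on $\operatorname{Spec} R'$, and then invoking factoriality to trivialize it. Everything else in the argument is either an elementary weighted-invariants computation or a formal Hopf-algebra diagram chase.
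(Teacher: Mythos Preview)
Your argument is correct and reaches the same conclusion, but by a genuinely different route than the paper. The paper first treats the case of a field $K$ via Galois cohomology and Hilbert~90, much as you do. For a factorial ring $R'$, however, the paper does not invoke $\operatorname{Pic}(R')$ or fppf cohomology at all: instead it lifts a given $\chi\in\CG(R')$ to a homomorphism $\tilde\chi:\CO_{D^\times}\to K$ into the fraction field (using the field case already established), and then adjusts $\tilde\chi$ by hand---rescaling the values $\tilde\chi(X_i)$ by suitable primes of $R'$ and compensating in $\tilde\chi(Y)$---until all values land in $R'$; unique factorization is precisely what makes this process terminate. Your approach is cleaner and more conceptual: it identifies the obstruction to lifting as a class in $H^1_{\mathrm{fppf}}(\operatorname{Spec} R',\GL_1)\cong\operatorname{Pic}(R')$ and kills it in one stroke via factoriality. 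The paper's argument, by contrast, is entirely elementary and avoids any torsor or Picard-group machinery, at the cost of an explicit prime-by-prime manipulation.
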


\begin{proof}
The first claim is clear. We prove the second first in case of a field.
Consider the exact sequence of group schemes
$$
1\to \GL_1\to D^\times\to\CG\to 1.
$$
For any field $K$ this gives an exact sequence of groups
$$
1\to \GL_1(K)\to D(K)^\times\to\CG(K)\to H^1(K,\GL_1),
$$
where the last item is the Galois-cohomology, which vanishes by Hilbert's Theorem 90.
This implies the claim for fields.
Now let $R$ be a factorial ring, so $R$ is integral and has unique factorization.
Write $K$ for its quotient field and let $\chi\in \CG(R)$, so $\chi$ is a ring homomorphism from $\CO_\CG$ to $R$.
By the first part of the proof, $\chi$ extends to a ring homomorphism $\tilde\chi:\CO_{D^\times}\to K$.
We show that this lift can be modified so as to have values in $R$.
For $1\le i\le d^2$ we have
$$
\tilde\chi(x)^2\tilde\chi(y)=\chi(x^2y)\in R.
$$
If $p$ is an irreducible in $R$ which divides the denominator of, say,  $\chi(x_1)$, we replace any $\tilde\chi(x_i)$ by $p\tilde\chi(x_i)$ and $\tilde\chi(y)$ by $\frac1{p^2}\tilde\chi(y)$ without changing $\chi$, so we can assume $\tilde\chi(x_1)$ to lie in $R$.
We repeat this with $x_2$ and so on.
Now if $\tilde\chi(y)$ does not lie in $R$ there must be an irreducible $p$ dividing its denominator. But as the product is in $R$, $p$ also divides $\tilde\chi(x_i)^2$ hence $\tilde\chi(x_i)$.
So we can replace $\tilde\chi(x_i)$ by $\frac1p\tilde\chi(x_i)$ and $\tilde\chi(y)$ by $p^2\tilde\chi(x_i)$ and by repeating this procedure we arrive at $\tilde\chi(x_i)$ and $\tilde\chi(y)$ both lying in $R$.
\end{proof}

We set $\Ga=\CG(R)$.
By Theorem 3.2.4 in \cite{Margulis}, the group $\Ga$ is a uniform lattice in $G=\CG(K_v)\cong \PGL_2(K_v)$, i.e., $\Ga$ is a discrete subgroup of $G$ such that $\Ga\bs G$ is compact.

Let $v_1,\dots,v_d$ denote the standard basis of $\R^d$.

\begin{theorem}
Let $S$ be a set of places of $K$ with $|S|\ge 2$.
Let $\CF(S)$ denote the set of field extensions $F/K$ such that every place $v\in S$ is non-decomposed in $F$.
Let $O(S)$ denote the set of all orders $\CO\subset F$ where $F\in\CF(S)$ such that $\CO$ is maximal at every $v\in S$.
For $\CO\in O(S)$ let $R(\CO)$ be its regulator.
Then the sum
$$
N(k)=\sum_{\substack{\CO\in O(S)\\ \ga\in\CO^\times\\ \la(\a_\ga)=k}}R(\CO)h(\CO) f_S(\CO),\qquad k\in\N^d,
$$
satisfies
$$
N(k)\sim\1_\La(k)\,C_{\La/\La'}(k)\, c^k
$$
for $k_j\to\infty$ for every $j=1,\dots,d$ independently.
Here we have
$$
c_j=\(\prod_{\nu=0}^{j-1}\frac{q^{d+1}-q^\nu}{q^j-q^\nu}\)^{d+1},\qquad j=1,\dots,d,
$$
where $q$ is the residue cardinality of $R_v$.
Further $C_{\La/\La'}:\La/\La'\to (0,\infty)$.
The lattice $\La\subset\R^d$ is generated by
$$
f_0=\frac2d(v_1+\dots+v_d),\quad f_j=f_0-2\frac{d+1}d v_j,
$$
where $j=1,\dots,d$ and $\La'\subset\La$ is the sublattice generated by $2v_1,\dots,2v_d$.
\end{theorem}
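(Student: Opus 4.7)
The plan is to reduce the class-number-weighted sum $N(k)$ to the geometric counting function $\sum_{[\ga]\in[\Ga^\gen],\,\la(a_\ga)=k}\Ind(\ga)$ of the Prime Geodesic Theorem of Section~4, applied to $\Ga=\CG(R)$ acting on the affine building of $\CG(K_v)\cong\PGL_{d+1}(K_v)$, and then to read off the constants $c_j$ and the lattices $\La,\La'$ from the local Hecke data at $v$ and the $\la$-coordinates of Definition~\ref{def3.1}.

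\textbf{Dictionary between $[\Ga^\gen]$ and triples.} I would first observe that a generic $\ga\in\Ga^\gen$ has $\Min(\ga)$ equal to a single apartment and that $K[\ga]\subset D(K)$ has maximal commutative dimension; by Lemma~\ref{lem1.1} it is a field $F$ of degree $d+1$ over $K$ lying in $\CF(S)$, so that the pullback $\CO=F\cap D(R)\in O(S)$ contains $\ga$ as a unit. Conversely, any triple $(\CO,\ga,\sigma)$ with $\CO\in O(S)$, $\ga\in\CO^\times$, and embedding $\sigma:F\hookrightarrow D(K)$ satisfying $\sigma^{-1}(D(R))=\CO$ produces an element of $\Ga^\gen$, and two triples yield the same $\Ga$-conjugacy class precisely when the embeddings lie in the same $D(R)^\times$-orbit. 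By Lemma~\ref{lem1.3}, for fixed $(\CO,\ga)$ there are exactly $f_S(\CO)h(\CO)$ such orbits, accounting for the $h(\CO)f_S(\CO)$ factor in the weight.

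\textbf{The index as regulator.} Next I would identify $\Ind(\ga)=|G^\a_\ga/\Ga^\a_\ga|$ with $R(\CO)$. The centralizer $G_\ga\subset G=\CG(K_v)$ is $\CG(F\otimes_K K_v)$, whose image $G^\a_\ga$ in the translation group of $\a=\Min(\ga)$ is a full-rank-$d$ lattice; $\Ga^\a_\ga$ is the image therein of $\CO^\times$ modulo torsion and scalars. Under the $\la$-coordinates of Definition~\ref{def3.1}, which realise, up to a fixed normalization, the logarithmic embeddings at the places of $F$ above $v$, the covolume of $\Ga^\a_\ga$ in $G^\a_\ga$ coincides with $R(\CO)$. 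Combining this identification with the embedding count gives
\[
\sum_{[\ga]\in[\Ga^\gen],\,\la(a_\ga)=k}\Ind(\ga)=\sum_{\substack{\CO\in O(S)\\ \ga\in\CO^\times\\ \la(a_\ga)=k}}R(\CO)h(\CO)f_S(\CO)=N(k),
\]
so that the Prime Geodesic Theorem immediately supplies $N(k)\sim\1_\La(k)C_{\La/\La'}(k)c^k$.

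\textbf{Explicit constants and main obstacle.} Finally I would compute $c_j=|Ka_jK/K|^{1/\la_j(a_j)}$. Taking $a_j\in A^-$ to be the $j$-th minuscule cocharacter of $\PGL_{d+1}$, one has $\la_j(a_j)=1$, and the Hecke cardinality $|Ka_jK/K|$ is a standard chamber-count in the building over the residue field of size $q$, yielding $\prod_{\nu=0}^{j-1}((q^{d+1}-q^\nu)/(q^j-q^\nu))^{d+1}$. The lattice $\La$ is the image of the cocharacter lattice of the adjoint torus in the $\la$-coordinates, generated by the stated $f_0,\ldots,f_d$, and $\La'$ generated by $\la(a_1),\ldots,\la(a_d)$ coincides with the lattice spanned by $2v_1,\ldots,2v_d$, the factor~$2$ reflecting the scaling between coroot and coweight coordinates for $\PGL_{d+1}$. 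The principal obstacle will be the identification of $\Ind(\ga)$ with $R(\CO)$: this requires carefully matching the $\la$-coordinates with the logarithmic embedding defining the regulator, and tracking the roles of torsion in $\CO^\times$, of the scalar action of $R^\times$, and of the Weyl-chamber fundamental domain; the remaining steps---the embedding count from Lemma~\ref{lem1.3} and the Hecke cardinality for $\PGL_{d+1}(K_v)$---are then a matter of standard computation.
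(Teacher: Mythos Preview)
Your strategy matches the paper's proof almost exactly: identify $\Ind(\ga)$ with the regulator $R(\CO_\ga)$ via the centraliser $G_\ga\cong F_\ga^\times$, use Lemmas~\ref{lem1.2} and~\ref{lem1.3} to translate the sum over $[\Ga^\gen]$ into the class-number-weighted sum over orders, and then invoke the Prime Geodesic Theorem. This is precisely what the paper does, and your identification of the ``principal obstacle'' (matching $\Ind(\ga)$ with $R(\CO)$) is accurate; the paper in fact dispatches this in a single sentence, simply asserting that $G_\ga=F_\ga^\times$ forces $|G_\ga^\a/\Ga_\ga^\a|=R(\CO_\ga)$.

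There is, however, a computational slip in your determination of $c_j$. You take $a_j$ to be the $j$-th \emph{minuscule} cocharacter $m_j=\mathrm{diag}(\pi^{-1},\dots,\pi^{-1},1,\dots,1)$ and claim $|Km_jK/K|=\bigl(\prod_{\nu=0}^{j-1}(q^{d+1}-q^\nu)/(q^j-q^\nu)\bigr)^{d+1}$. In fact $|Km_jK/K|$ is only the Gaussian binomial $\prod_{\nu=0}^{j-1}(q^{d+1}-q^\nu)/(q^j-q^\nu)$, the number of $j$-planes in $\F_q^{d+1}$, without the exponent. The paper obtains the exponent $d+1$ differently: the elements $a_j$ arising from \cite{BL} are \emph{not} the minuscule cocharacters but rather $a_j=m_j^{d+1}$, and then one uses that $a\mapsto|KaK/K|$ is a quasi-character on $A^-$ (Lemma~2.4.5 of \cite{BL}) to conclude $|Ka_jK/K|=|Km_jK/K|^{d+1}$. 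Correspondingly $\la_j(a_j)\ne 1$; this is consistent with the stated sublattice $\La'$ being generated by $2v_1,\dots,2v_d$ rather than $v_1,\dots,v_d$. So your overall architecture is right, but the provenance of the exponent $d+1$ in $c_j$ needs to be corrected.
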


\begin{proof}
The group $G_\ga$ equals $F_\ga^\times$, where $F_\ga$ is the field extension given by the centralizer of $\ga$. Therfore one sees that $|G_\ga^\a/\Ga_\ga^\a|$ equals the regulator of the order $\CO_\ga$. 
The theorem now follows from the prime geadesic theorem together with Lemma \ref{lem1.2} and Lemma \ref{lem1.3}.

The computation of the $c_j$ follows their definition in \cite{BL}, they are given as $c_j=|Ka_jK/K|$. For this computation one can assume that $K=\SL_{d+1}(R_v)$.
Let $m_j$ be the diagonal matrix with entries $(\pi^{-1},\dots,\pi^{-1},1,\dots,1)$ with $j$-times $\pi^{-1}$, where $\pi$ is a uniformizer of the discrete valuation ring $R_v$.
Then two elements $b,b'$ of $Km_jK$ lie in the same $K$-coset if and only if the $R_v$-span of $b e_1,\dots,be_{d+1}$ equals the $R_v$-span of $b' e_1,\dots,b'e_{d+1}$, where $e_1,\dots,e_{d+1}$ is the standard basis of $K_v^{d+1}$.
Taking this modulo the $R_v$-submodule spanned by $e_1,\dots,e_{d+1}$ we end up determining the number of $j$-dimensional $\F_q$ sub vector spaces of $\F_q^{d+1}$, which is $\prod_{\nu=0}^{j-1}\frac{q^{d+1}-q^\nu}{q^j-q^\nu}$ (The enumerator goves the number of bases and the denominator the number of invertible matrices for a given bases.) This number is $|Km_jK/K|$.
But now $a_j=m_j^{d+1}$ and the map $a\mapsto |KaK/K|$ is a quasi-character on $A^-$, see Lemma 2.4.5 of \cite{BL}.
This concludes the computation of $c_j$ and finishes the proof of the theorem.
\end{proof}

\begin{bibdiv} \begin{biblist}

\bib{AB}{book}{
   author={Abramenko, Peter},
   author={Brown, Kenneth S.},
   title={Buildings},
   series={Graduate Texts in Mathematics},
   volume={248},
   publisher={Springer, New York},
   date={2008},
   pages={xxii+747},
   isbn={978-0-387-78834-0},
   doi={10.1007/978-0-387-78835-7},
}

\bib{class}{article}{
   author={Deitmar, Anton},
   title={Class numbers of orders in cubic fields},
   journal={J. Number Theory},
   volume={95},
   date={2002},
   number={2},
   pages={150--166},
   issn={0022-314X},
}

\bib{IharaClass}{article}{
   author={Deitmar, Anton},
   title={Ihara zeta functions and class numbers},
   journal={Adv. Stud. Contemp. Math., Kyungshang},
   volume={24},
   number={4},
   pages={439--450},
   year={2014},
}

\bib{class}{article}{
   author={Deitmar, Anton},
   title={Class numbers of orders in cubic fields},
   journal={J. Number Theory},
   volume={95},
   date={2002},
   number={2},
   pages={150--166},
   issn={0022-314X},
}

\bib{classNC}{article}{
   author={Deitmar, Anton},
   author={Hoffmann, Werner},
   title={Asymptotics of class numbers},
   journal={Invent. Math.},
   volume={160},
   date={2005},
   number={3},
   pages={647--675},
   issn={0020-9910},
   doi={10.1007/s00222-004-0423-y},
}

\bib{HR}{article}{
   author={Deitmar, A.},
   title={A prime geodesic theorem for higher rank spaces},
   journal={Geom. Funct. Anal.},
   volume={14},
   date={2004},
   number={6},
   pages={1238--1266},
   issn={1016-443X},
   doi={10.1007/s00039-004-0490-7},
}

\bib{HR2}{article}{
   author={Deitmar, Anton},
   title={A prime geodesic theorem for higher rank. II. Singular geodesics},
   journal={Rocky Mountain J. Math.},
   volume={39},
   date={2009},
   number={2},
   pages={485--507},
   issn={0035-7596},
   review={\MR{2491148}},
   doi={10.1216/RMJ-2009-39-2-485},
}

\bib{Kyunshang}{article}{
	author={Deitmar, Anton},
	title={Ihara zeta functions and class numbers},
	journal={Adv. Studies in Contemp. Math.},
	volume={24},
	issue={4}, 
	pages={439--450},
	date={2014},
}

\bib{BL}{article}{
   author={Deitmar, Anton},
   author={Kang, Ming-Hsuan},
   author={McCallum, Rupert},
   title={Building lattices and zeta functions},
   eprint={http://arxiv.org/abs/1412.3327},
}

\bib{FJK}{article}{
   author={Friedman, J. S.},
   author={Jorgenson, J.},
   author={Kramer, J.},
   title={An effective bound for the Huber constant for cofinite Fuchsian
   groups},
   journal={Math. Comp.},
   volume={80},
   date={2011},
   number={274},
   pages={1163--1196},
   issn={0025-5718},
   doi={10.1090/S0025-5718-2010-02430-5},
}

\bib{Margulis}{book}{
   author={Margulis, G. A.},
   title={Discrete subgroups of semisimple Lie groups},
   series={Ergebnisse der Mathematik und ihrer Grenzgebiete (3) [Results in
   Mathematics and Related Areas (3)]},
   volume={17},
   publisher={Springer-Verlag, Berlin},
   date={1991},
   pages={x+388},
   isbn={3-540-12179-X},
}

\bib{Neukirch}{book}{
   author={Neukirch, J{\"u}rgen},
   title={Algebraic number theory},
   series={Grundlehren der Mathematischen Wissenschaften [Fundamental
   Principles of Mathematical Sciences]},
   volume={322},
   note={Translated from the 1992 German original and with a note by Norbert
   Schappacher;
   With a foreword by G. Harder},
   publisher={Springer-Verlag, Berlin},
   date={1999},
   pages={xviii+571},
   isbn={3-540-65399-6},
}

\bib{Pierce}{book}{
   author={Pierce, Richard S.},
   title={Associative algebras},
   series={Graduate Texts in Mathematics},
   volume={88},
   note={Studies in the History of Modern Science, 9},
   publisher={Springer-Verlag, New York-Berlin},
   date={1982},
   pages={xii+436},
   isbn={0-387-90693-2},
}

\bib{Raulf}{article}{
   author={Raulf, Nicole},
   title={Trace formulae and applications to class numbers},
   journal={Cent. Eur. J. Math.},
   volume={12},
   date={2014},
   number={6},
   pages={824--847},
   issn={1895-1074},
   doi={10.2478/s11533-013-0384-8},
}

\bib{Reiner}{book}{
   author={Reiner, Irving},
   title={Maximal orders},
   series={London Mathematical Society Monographs. New Series},
   volume={28},
   note={Corrected reprint of the 1975 original;
   With a foreword by M. J.\ Taylor},
   publisher={The Clarendon Press, Oxford University Press, Oxford},
   date={2003},
   pages={xiv+395},
   isbn={0-19-852673-3},
}

\bib{Sarnak}{article}{
   author={Sarnak, Peter},
   title={Class numbers of indefinite binary quadratic forms},
   journal={J. Number Theory},
   volume={15},
   date={1982},
   number={2},
   pages={229--247},
   issn={0022-314X},
   doi={10.1016/0022-314X(82)90028-2},
}

\bib{Sound}{article}{
   author={Soundararajan, K.},
   author={Young, Matthew P.},
   title={The prime geodesic theorem},
   journal={J. Reine Angew. Math.},
   volume={676},
   date={2013},
   pages={105--120},
   issn={0075-4102},
}
	
\end{biblist} \end{bibdiv}

{\small Mathematisches Institut\\
Auf der Morgenstelle 10\\
72076 T\"ubingen\\
Germany\\
\tt deitmar@uni-tuebingen.de}

\today

\end{document}